\documentclass[12pt]{amsart}
\usepackage{amssymb,amsmath,amsfonts,latexsym}
\usepackage{bm}
\usepackage[all,cmtip]{xy}
\usepackage{amscd}
\usepackage{hyperref}
\newcommand{\bea}{\begin{eqnarray}}
	\newcommand{\eea}{\end{eqnarray}}

\newcommand{\clb}{\mathcal{B}}

\newcommand{\cld}{\mathcal{D}}
\newcommand{\cle}{\mathcal{E}}

\newcommand{\clh}{\mathcal{H}}
\newcommand{\clk}{\mathcal{K}}

\newcommand{\clm}{\mathcal{M}}

\newcommand{\clw}{\mathcal{W}}

\newcommand{\D}{\mathbb{D}}

\def\textmatrix#1&#2\\#3&#4\\{\bigl({#1 \atop #3}\ {#2 \atop #4}\bigr)}
\def\dispmatrix#1&#2\\#3&#4\\{\left({#1 \atop #3}\ {#2 \atop #4}\right)}
\newcommand{\be}{\begin{equation}}
\newcommand{\ee}{\end{equation}}
\newcommand{\ben}{\begin{eqnarray*}}
\newcommand{\een}{\end{eqnarray*}}

\newcommand{\NI}{\noindent}

\newcommand{\bi}{\begin{itemize}}
\newcommand{\ei}{\end{itemize}}
\newcommand{\diag}{\mbox{diag}}

\newcommand\ran{\mbox{Ran}}

\theoremstyle{definition}

\theoremstyle{plain}

\newtheorem{thm}{Theorem}[section]
\newtheorem{cor}[thm]{Corollary}
\newtheorem{lem}[thm]{Lemma}
\newtheorem{prop}[thm]{Proposition}
\theoremstyle{definition}

\newtheorem{ex}[thm]{Example}

\numberwithin{equation}{section}

\let\phi=\varphi

\begin{document}

\title[A functional model for hyponormal contractions]{A functional model for hyponormal contractions}

\author[Maji]{Amit Maji}
\address{Indian Institute of Technology Roorkee, Department of Mathematics,
	Roorkee-247 667, Uttarakhand,  India}
\email{amit.maji@ma.iitr.ac.in, amit.iitm07@gmail.com (Corresponding author)}

\author[Paramanik]{Sachin Paramanik}
\address{Indian Institute of Technology Roorkee, Department of Mathematics,
	Roorkee-247 667, Uttarakhand, India}
\email{sachin\_p@ma.iitr.ac.in}

\subjclass[2010]{47B20, 47B15, 47A45, 47A20, 47A05, 30H10, 47B47}


\keywords{Hyponormal operators, contraction, self-commutator, Hardy space, shift operators}

\begin{abstract}
In this paper, we establish an analytical model for a class of hyponormal contractions $T$ satisfying $\ran [T^*,T] \subseteq \mathcal D_{T^*}\ominus \mathcal D_T$, where $\mathcal D_{T^*}$ and $\mathcal D_T$ are the associated defect spaces for $T$. We also show that, under a certain condition, a contraction $T$ with $\mathcal D_T \subseteq \mathcal D_{T^*}$ is hyponormal.
\end{abstract}
\maketitle

\section{Introduction}

Throughout, $\mathbb Z_+$ denotes the set of all non-negative integers, $\mathcal H$ stands for a separable complex Hilbert space, and $\mathcal B(\mathcal H)$ denotes the $C^*$-algebra of all bounded linear operators on $\clh$. A closed subspace 
$\mathcal M$ of $\mathcal H$ is called invariant under $T \in \mathcal B(\mathcal H)$ if $T\mathcal M \subseteq \mathcal M$ and $\mathcal M$ reduces $T$ if it is invariant under both $T$ and $T^*$. The self-commutator of $T$ is an operator $T^*T-TT^*$ and is denoted by $[T^*,T]$. An operator $T\in \mathcal B(\mathcal H)$ is said to be normal if $[T^*,T]=0$ and hyponormal if $[T^*,T]\geq 0$. An operator $T$ is said to be an isometry if $T^*T=I$, where $I$ stands for the identity operator and $T$ is unitary if $T^*T=TT^*=I$. For $T \in \mathcal B(\mathcal H)$, $\ker T$, $\ran T$, and $\sigma(T)$ denote the kernel, range, and spectrum of $T$, respectively.
An operator $T \in \mathcal B(\mathcal H)$ is said to be a partial isometry if it is an isometry on $(\ker T)^\perp$, and the spaces $(\ker T)^\perp$ and $\ran T$ are called its initial space and final space, respectively.  For a contraction $T$ (i.e., $\|T\|\leq 1$), the associated two defect operators are defined as
\[
D_{T}=(I-T^*T)^{1/2}
\text{   and   }
D_{T^*}=(I-TT^*)^{1/2}.
\] 
The corresponding defect spaces are defined as
\[
\mathcal D_{T}=\overline {\ran}D_{T}
\text{   and   }
\mathcal D_{T^*}=\overline {\ran}D_{T^*},
\]
and it is easy to prove that $T \mathcal D_{T} \subseteq \mathcal D_{T^*}$ 
(see \cite{NF-BOOK}).

The following celebrated result gives a complete characterization of an isometry on Hilbert space.

\begin{thm} [Wold Decomposition~\cite{WOLD-TIME SERIES},\cite{NF-BOOK}] 
Let $V\in\mathcal B(\mathcal H)$ be an isometry. Then $\clh = \mathcal H_1 \oplus \clh_2$, where $\mathcal H_1, \clh_2$ are reducing subspaces of $\clh$ such that $V|_{\mathcal H_1}$ is unitary and $V|_{\mathcal H_2}$ is a unilateral shift.
\end{thm}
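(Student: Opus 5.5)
The plan is the classical wandering-subspace argument. Put $\mathcal L = \clh \ominus V\clh = \ker V^*$. Since $V$ is an isometry, $V\clh$ is closed, and more generally $V^n\clh$ is closed for every $n$, because $V^n$ is again an isometry. The subspaces $V^n\mathcal L$, $n\in\mathbb Z_+$, will serve as the building blocks of the shift part.

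\textbf{Step 1: orthogonality.} The subspaces $V^n\mathcal L$ are mutually orthogonal. For $m>n$, $x,y\in\mathcal L$, we have
\[
\langle V^m x, V^n y\rangle = \langle V^{m-n}x, y\rangle = 0,
\]
because $V^*V=I$ and $V^{m-n}x \in V\clh \perp \mathcal L$.

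\textbf{Step 2: the decomposition.} Define
\[
\clh_2 = \bigoplus_{n\ge 0} V^n\mathcal L, \qquad \clh_1 = \bigcap_{n\ge 0} V^n\clh .
\]
We show $\clh = \clh_1\oplus\clh_2$. From $V^n\clh = V^n\mathcal L \oplus V^{n+1}\clh$ we get by induction
\[
\clh = \mathcal L\oplus V\mathcal L\oplus\cdots\oplus V^{n-1}\mathcal L\oplus V^n\clh .
\]
Hence a vector is orthogonal to $\clh_2$ if and only if it lies in $V^n\clh$ for all $n$, that is, $\clh_2^\perp=\clh_1$.

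\textbf{Step 3: reducing subspaces and the two restrictions.}
\begin{itemize}
\item $V\clh_2\subseteq\clh_2$ is clear.
\item $V^*V^n\mathcal L = V^{n-1}\mathcal L$ for $n\ge1$, and $V^*\mathcal L=0$. So $\clh_2$ reduces $V$, and hence so does $\clh_1=\clh_2^\perp$.
\item On $\clh_2$, $V$ maps the orthogonal summand $V^n\mathcal L$ isometrically onto $V^{n+1}\mathcal L$. Identifying $\clh_2$ with $\ell^2(\mathbb Z_+,\mathcal L)$, equivalently $H^2_{\mathcal L}(\D)$, $V|_{\clh_2}$ is the unilateral shift of multiplicity $\dim\mathcal L$.
\item On $\clh_1$, $V$ is an isometry, so it suffices to show surjectivity. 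Let $x\in\clh_1$. Then $x\in V\clh$, say $x=Vy$, and $y=V^*x$. Since $\clh_1$ reduces $V$, we get $y\in\clh_1$. Thus $V|_{\clh_1}$ is unitary.
\end{itemize}

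\textbf{Main obstacle.} The only step needing care is the identification $\clh_2^\perp=\bigcap_n V^n\clh$. This rests on the finite orthogonal decomposition in Step 2 and on the closedness of each $V^n\clh$, which holds because isometries have closed range. Everything else is direct computation using $V^*V=I$.
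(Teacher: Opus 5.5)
Your argument is correct and complete. The paper does not prove this theorem but quotes it from Wold and Sz.-Nagy--Foias, and your wandering-subspace construction ($\mathcal L=\ker V^*$, $\clh_2=\bigoplus_{n\ge 0}V^n\mathcal L$, $\clh_1=\bigcap_{n\ge 0}V^n\clh$) is the standard proof of that cited result, which also gives the form the paper uses later, that a pure isometry $S$ is unitarily equivalent to $M_z$ on $H^2_{\ker S^*}(\D)$.
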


Let $\mathcal E$ be a separable Hilbert space. The vector-valued Hardy space over the unit disc $\mathbb D$, denoted by $H^2_{\mathcal E}(\mathbb D)$, is defined as
\[
H^2_{\mathcal E}(\mathbb D)
=
\{
f(z)
=\sum_{n=0}^{\infty}a_n z^n : ~~
a_n\in\mathcal E,\, \mbox{and}~
\sum_{n=0}^{\infty}\|a_n\|_{\mathcal E}^2<\infty
\}.
\]
We will often identify $H^2_{\mathcal E}(\mathbb D)$ as $H^2(\mathbb D)\otimes\mathcal E$
via the canonical unitary map $z^n \eta \mapsto z^n\otimes \eta$ for $\eta \in \cle$.
Let $M_z$ be the multiplication operator on $H^2(\mathbb D)$ by the coordinate function $z$. It is easy to see that $M_z^*M_z =I$ and $M_zM_z^* = I - P_{\mathbb{C}}$, where
$P_{\mathbb{C}}$ is the orthogonal projection of $H^2(\mathbb D)$ onto the subspace of constant functions. We say that an operator $T \in \clb(\clh)$ is unitarily equivalent to an operator $R \in \clb(\clk)$ if there exists a unitary $U \in \clb(\clh, \clk)$ such that $UT=RU$. In particular, by Wold decomposition, every pure isometry $S$ (that is, there is no unitary summand of $S$) on $\clh$ is unitarily equivalent to $M_z$ on the vector-valued Hardy space $H^2_{\ker S{^*}}(\D)$ (see \cite{NF-BOOK}).

One of the major goals in operator theory is to classify operators through suitable functional models. Due to the spectral theory, the class of normal operators is well understood. A natural and important extension of the class of normal operators is the class of subnormal operators. An operator $T\in \mathcal B(\mathcal H)$ is said to be subnormal if it admits a normal extension to a larger Hilbert space. The structure of subnormal operators has been studied under various conditions on their self-commutators; see, for instance, [\cite{Aleman-subnormal}, \cite{CL}, \cite{Mccarthy}, \cite{Morrel}] and the references therein. The self-commutator $[T^{*}, T] = T^*T -TT^*$ is an important object in operator theory, measuring the deviation of an operator $T$ from normality, and has deep connections with index theory, essentially normal operators, Toeplitz operators and related areas. A subnormal operator $T$ satisfies the condition $[T^{*}, T] \geq 0$, that is, $T$ is hyponormal. The theory of hyponomal operators is more intricate to analyze, nevertheless, it has undergone significant development over the past several decades. We refer the reader to \cite{Halmos},\cite{Stampfli}, \cite{CP}, \cite{CLL}, \cite{Duggal} for some of the major initial contributions. There has been extensive study so far on hyponormal operators with finite rank self-commutator. In \cite{Xia1}, the author has shown that hyponormal operator with rank one self- commutator is linear combination of unilateral shift of multiplicity one and identity operator. In \cite{Rank two}, characterization for hyponormal operators with rank two has been established (see also \cite{Xia}, \cite{XIA2}). Recently, Das \cite{cnu classification} has given a classification for completely non-unitary contractions satisfying $\mathcal D_T \subseteq \mathcal D_{T^*}$ with $\text{dim}~\mathcal D_{T^*}< \infty$. It has been shown that such an operator is a finite rank perturbation of unilateral shift of finite multiplicity. Note that any hyponormal operator $T \in \clb(\clh)$ satisfies the condition $\mathcal D_T\subseteq \mathcal D_{T^*}$. Employing Douglas's lemma \cite{Douglas's lemma}, one can see that 
\[
\overline{\ran}[T^*,T]\subseteq \mathcal D_{T^*}.
\]	
It is natural to ask what can be said about a hyponormal contraction $T$ such that 
\[
\ran{[T^*, T]} \subseteq \mathcal D_{T^*}\ominus \mathcal D_T. 
\]
In \cite{Hyonormal Cont.}, B., Bala and Sarkar obtained a model for the class of hyponormal contractions $T$ with finite rank self-commutator such that $\ran{[T^*, T]} \subseteq \mathcal D_{T^*}\ominus \mathcal D_T$. This leads to a question whether a similar model can be obtained in the same setting when the self-commutator is not of finite rank. In this work, we obtain a model for hyponormal contraction $T$ such that 
$\ran{[T^*, T]} \subseteq \mathcal D_{T^*}\ominus \mathcal D_T$. The key tool that we have used is the spectral theory of self-adjoint operators, functional calculus, and the geometry of Hilbert spaces. We anticipate that our approach will further help to classify hyponormal contractions in a more general context.

The remainder of the paper is outlined as follows. In Section 2, we discuss some basic results that will be useful in the sequel and also of independent interest. The analytic
model of a certain class of hyponormal contractions is given in the Section 3.

\section{Preparatory Results}	

In this section, we will prove some simple yet powerful results which will be useful in the sequel and for the independent interest as well.

The following result is a generalization of Proposition 5.1 in \cite{Hyonormal Cont.}.
We provide here the proof for the sake of completeness.

\begin{lem}\label{lem1-Equality}
Let $T$ be a contraction on $\clh$ such that $\mathcal D_T \subseteq \mathcal D_{T^*}$ and ${\ran} [T^*,T] \subseteq \mathcal D_{T^*}\ominus \mathcal D_T$. Then, $\overline{\ran} [T^*,T]=\mathcal D_{T^*}\ominus \mathcal D_T$.
\end{lem}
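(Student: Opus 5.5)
We need to show that $\mathcal D_{T^*}\ominus \mathcal D_T \subseteq \overline{\ran}[T^*,T]$, since the other inclusion is given (the space $\mathcal D_{T^*}\ominus\mathcal D_T$ is closed). Equivalently, we show that any $x \in \mathcal D_{T^*}\ominus \mathcal D_T$ with $x \perp \ran[T^*,T]$ must be zero. The key identity is
\[
[T^*,T] = T^*T - TT^* = (I-TT^*) - (I-T^*T) = D_{T^*}^2 - D_T^2 .
\]

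\textbf{Step 1.} Let $x \in \mathcal D_{T^*}\ominus \mathcal D_T$ be orthogonal to $\ran[T^*,T]$.
\begin{itemize}
\item Since $[T^*,T]$ is self-adjoint, $x\perp\ran[T^*,T]$ means $x\in\ker [T^*,T]$.
\item Since $x\perp \mathcal D_T=\overline{\ran}D_T$ and $D_T$ is self-adjoint, we get $x \in \ker D_T$, hence $D_T^2x=0$.
\item The identity above then gives $D_{T^*}^2 x = [T^*,T]x + D_T^2 x = 0$, so $D_{T^*}x=0$.
\end{itemize}

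\textbf{Step 2.} Now $x\in\ker D_{T^*} = (\overline{\ran} D_{T^*})^\perp = \mathcal D_{T^*}^\perp$. But $x\in\mathcal D_{T^*}$ by assumption, so $x=0$. Therefore
\[
(\mathcal D_{T^*}\ominus\mathcal D_T)\ominus \overline{\ran}[T^*,T]=\{0\}.
\]

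\textbf{Step 3.} Since $\overline{\ran}[T^*,T]$ is a closed subspace of $\mathcal D_{T^*}\ominus\mathcal D_T$ by hypothesis, the vanishing of its orthogonal complement within that space gives the claimed equality
\[
\overline{\ran}[T^*,T]=\mathcal D_{T^*}\ominus\mathcal D_T .
\]

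The hypothesis $\mathcal D_T\subseteq\mathcal D_{T^*}$ serves only to make $\mathcal D_{T^*}\ominus\mathcal D_T$ the natural orthogonal complement; the argument above does not otherwise use it. The only point needing care is the identification of orthogonal complements of ranges with kernels of self-adjoint operators. Beyond that, no real obstacle is expected.
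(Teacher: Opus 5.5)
Your proof is correct, and it is the dual of the paper's argument rather than a copy of it.

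The paper works directly with a given $x\in\mathcal D_{T^*}\ominus\mathcal D_T$. It approximates $x$ by vectors $D_{T^*}^2h_n$ and splits each one along $\mathcal D_{T^*}=(\mathcal D_{T^*}\ominus\mathcal D_T)\oplus\mathcal D_T$; this is where it uses $\mathcal D_T\subseteq\mathcal D_{T^*}$. It then uses $D_{T^*}^2-D_T^2=[T^*,T]$ and the range hypothesis to show that the first component equals $[T^*,T]h_n$. So $x$ appears as an explicit limit of elements of $\ran[T^*,T]$, and the argument in effect proves $P_{\mathcal D_{T^*}\ominus\mathcal D_T}D_{T^*}^2=[T^*,T]$.

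You instead show that the orthogonal complement of $\ran[T^*,T]$ inside $\mathcal D_{T^*}\ominus\mathcal D_T$ is trivial. You apply the same identity to a vector annihilated by both $D_T$ and $[T^*,T]$, then use $\ker D_{T^*}=\mathcal D_{T^*}^{\perp}$. The paper's route is constructive. Yours is shorter, needs no approximating sequences, and, as you note, never uses $\mathcal D_T\subseteq\mathcal D_{T^*}$ once $\mathcal D_{T^*}\ominus\mathcal D_T$ is read as $\mathcal D_{T^*}\cap\mathcal D_T^{\perp}$.

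That redundancy is genuine: the range hypothesis already forces the inclusion. It gives $\mathcal D_T\subseteq\ker[T^*,T]$, hence $\ran D_T^2=D_T^2(\mathcal D_T)=D_{T^*}^2(\mathcal D_T)\subseteq\mathcal D_{T^*}$. Since $\ran D_T^2$ is dense in $\mathcal D_T$, it follows that $\mathcal D_T\subseteq\mathcal D_{T^*}$.
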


\begin{proof}
Let $x\in \mathcal D_{T^*}\ominus \mathcal D_T$. Then, there exists a sequence $(h_n)$ in
$\clh$ such that 
\[
x = \lim\limits_{n\to \infty }D^2_{T^*}h_n.
\]
Define, for each $n \geq 1$, $x_n = D^2_{T^*}h_n$. Thus $x_n \in  \cld_{T^*}$ for each $n$. Write $\mathcal D_{T^*}= (\mathcal D_{T^*}\ominus \mathcal D_T ) \oplus \mathcal D_{T}$. Therefore, $x_n=y_n\oplus z_n$ for $y_n\in  (\mathcal D_{T^*}\ominus \mathcal D_T)$ and $z_n\in \mathcal D_{T}$ for $n \geq 1$.
As $x_n$ converges to $x$ and $x\in \mathcal D_{T^*}\ominus \mathcal D_T$, we have 
\[
\|y_n-x\|^2+\|z_n\|^2=\|y_n+z_n-x\|^2= \|x_n-x\|^2 \to 0
\]
as $n\to \infty$. This infers that $y_n \to x$ and $z_n\to 0$ as $n\to \infty$.
Now $D^2_{T^*}-	D^2_{T}=[T^*,T]$ yields
\begin{align*}
y_n-[T^*,T]h_n=D^2_{T}h_n-z_n \quad (n \geq 1).
\end{align*}
Since $y_n\in \mathcal D_{T^*}\ominus \mathcal D_T $ and $\mbox{ran}[T^*,T]\subseteq \mathcal D_{T^*}\ominus \mathcal D_T  $, we have $y_n=[T^*,T]h_n$ Therefore, $y_n\in {\ran} [T^*,T]$.
Hence, $x=\displaystyle\lim_{n\to \infty } y_n\in \overline {\ran} [T^*,T]$. This completes the proof.
\end{proof}

We know that if a contraction $T$ is hyponormal, then $\mathcal D_T\subseteq \mathcal D_{T^*}$. The converse is not true in general. However, we will see under a certain condition the converse holds in the following results.

\begin{prop}\label{iff condition}
Let $T \in \mathcal B(\mathcal H)$ be a contraction. Then $\mathcal D_T \subseteq \mathcal D_{T^*}$ if and only if $T$ is of the form 
\[\begin{pmatrix}
		S & X\\
		0 & Y
\end{pmatrix}\in \mathcal B\left((\mathcal H\ominus \mathcal D_T)\oplus \mathcal D_T\right),
\] 
where $S \in \mathcal B\left(\mathcal H\ominus \mathcal D_T \right)$
is an isometry, $X \in \mathcal B\left(\mathcal D_T, (\mathcal H\ominus \mathcal D_T)\right)$ and $ Y \in \mathcal B\left(\mathcal D_T\right)$ are contractions.
\end{prop}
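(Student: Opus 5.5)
Write $\mathcal H = (\mathcal H\ominus\mathcal D_T)\oplus\mathcal D_T$, and note first that $\mathcal H\ominus\mathcal D_T=\ker D_T=\ker(I-T^*T)$. This is exactly the set of vectors $h$ with $\|Th\|=\|h\|$, since $\|h\|^2-\|Th\|^2=\|D_Th\|^2$. Also $\mathcal H\ominus\mathcal D_{T^*}=\ker D_{T^*}=\ker(I-TT^*)$. So the condition $\mathcal D_T\subseteq\mathcal D_{T^*}$ is equivalent, by taking orthogonal complements, to
\[
\ker(I-TT^*)\subseteq \ker(I-T^*T).
\]
The whole proof reduces to relating the block form of $T$ to this kernel inclusion.

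\textbf{Forward direction.} Assume $\mathcal D_T\subseteq\mathcal D_{T^*}$. The block form is equivalent to two facts: $T$ maps $\mathcal H\ominus\mathcal D_T$ into itself (the lower left entry vanishes), and $T$ restricted there is an isometry.

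The isometry part is automatic. For $h\in\ker D_T$ we have $\|Th\|=\|h\|$, so the compression $S$ is isometric once invariance is known.

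For invariance, take $h\in\ker(I-T^*T)$, so $T^*Th=h$. Then
\[
(I-TT^*)Th = Th - T(T^*Th) = 0,
\]
so $Th\in\ker(I-TT^*)\subseteq\ker(I-T^*T)$ by the hypothesis. Hence $T(\mathcal H\ominus\mathcal D_T)\subseteq\mathcal H\ominus\mathcal D_T$, giving the zero entry.

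Finally, $X=P_{\mathcal H\ominus\mathcal D_T}T|_{\mathcal D_T}$ and $Y=P_{\mathcal D_T}T|_{\mathcal D_T}$ are compressions of a contraction, so both are contractions.

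\textbf{Converse.} Suppose $T$ has the stated upper triangular form with $S$ an isometry, and take $h\in\ker(I-TT^*)$. We need to show $h\perp\mathcal D_T$, i.e. $h\in\ker D_T$.

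Since $TT^*h=h$ and $T$ is a contraction, $\|T^*h\|=\|h\|$, so $T^*h\in\ker(I-TT^*)$ as well. More usefully, $h=T(T^*h)$ with $\|T^*h\|=\|h\|$, which gives $\|T(T^*h)\|=\|T^*h\|$. Hence $g:=T^*h$ lies in $\ker D_T=\mathcal H\ominus\mathcal D_T$.

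Invariance of $\mathcal H\ominus\mathcal D_T$ under $T$ (the zero block) then yields $h=Tg\in\mathcal H\ominus\mathcal D_T$. This is the desired inclusion $\ker D_{T^*}\subseteq\ker D_T$, and taking orthogonal complements gives $\mathcal D_T\subseteq\mathcal D_{T^*}$.

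I expect no serious obstacle. The one point to take care with is the converse, where the key is to pass through $g=T^*h$ rather than trying to use the isometry $S$ directly. Only invariance is really used there; the isometry property of $S$ is in fact automatic from the definition of $\mathcal D_T$.
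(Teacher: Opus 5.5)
Your argument is correct, and it takes a different, coordinate-free route from the paper's.

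For the direction ``block form $\Rightarrow$ $\mathcal D_T\subseteq\mathcal D_{T^*}$'', the paper writes $I-T^*T$ and $I-TT^*$ as $2\times 2$ operator matrices. It reads off from $\ker(I-T^*T)=\mathcal H\ominus\mathcal D_T$ that $S^*X=0$ and that $I-X^*X-Y^*Y$ is injective on $\mathcal D_T$. Then, for $x\oplus y\in\ker(I-TT^*)$, it shows that $z=X^*x+Y^*y$ satisfies $(X^*X+Y^*Y)z=z$, so $z=0$ and $y=Yz=0$. Since $z=P_{\mathcal D_T}T^*(x\oplus y)$, this is an entrywise check of the fact you get in one line from $\|Tg\|=\|g\|$ with $g=T^*h$: $T^*$ maps $\ker D_{T^*}$ into $\ker D_T$. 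The paper's closing step $y=Yz$ is your invariance step written in coordinates.

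For the other direction, the paper gives no argument and refers to Proposition~2.1 of the cited work of B., Bala and Sarkar. Your observation that $T$ maps $\ker D_T$ into $\ker D_{T^*}$ settles it directly.

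Your route is therefore shorter and self-contained. It also shows that the real content of the proposition is that $\mathcal D_T\subseteq\mathcal D_{T^*}$ holds exactly when $\ker D_T$ is $T$-invariant; the isometry of $S$ and the contractivity of $X$ and $Y$ come for free. The paper's computation has one by-product: it exhibits the relation $S^*X=0$, which is used again in the proof of the main theorem.

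One slip: your sentence ``so $T^*h\in\ker(I-TT^*)$ as well'' is false in general. For the unilateral shift $Te_n=e_{n+1}$ on $\ell^2(\mathbb N)$ and $h=e_2$, one has $T^*h=e_1\notin\ker(I-TT^*)$. You never use that sentence. What you need, and then correctly prove, is $T^*h\in\ker(I-T^*T)$.
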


\begin{proof}
Let us first assume that $T$ is a contraction of the form 
\[\begin{pmatrix}
		S & X\\
		0 & Y
	\end{pmatrix}\in \mathcal B\left((\mathcal H\ominus \mathcal D_T)\oplus \mathcal D_T\right),
\] 
where $S \in \mathcal B\left(\mathcal H\ominus \mathcal D_T \right)$
is an isometry, $X \in \mathcal B\left(\mathcal D_T, (\mathcal H\ominus \mathcal D_T)\right)$ and $ Y \in \mathcal B\left(\mathcal D_T\right)$ are contractions.
	
Now
\begin{align*}
I-T^*T
=\begin{pmatrix}
			0 & -S^*X\\
			-X^*S & I-X^*X-Y^*Y
\end{pmatrix}
\end{align*}
and 
\begin{align*}
I-TT^*
=
\begin{pmatrix}
			I-SS^*-XX^* & -XY^*\\
			-YX^* & I-YY^*
		\end{pmatrix}
\end{align*} 		
on $\clh = (\mathcal H\ominus \mathcal D_T)\oplus \mathcal D_T$. 
Note that 
\[
\mathcal H\ominus \mathcal D_T= \text{ker} D_T=\text{ker} (I-T^*T).
\]
Therefore, $X^*S =0$ which implies $S^*X=0$ and also $I-X^*X-Y^*Y$ on $\mathcal D_T$ is injective.

Suppose that $x \oplus y \in \ker (I-TT^*)  \subseteq (\mathcal H\ominus \mathcal D_T)\oplus \mathcal D_T$. Then 
\begin{equation*}
x=SS^*x+XX^*x+XY^*y=SS^*x+X (X^*x+ Y^*y)
\end{equation*}
and 
\begin{equation*}
y= YX^*x+YY^* y = Y(X^*x+ Y^*y).
\end{equation*}
Set 
\[
z = X^*x+ Y^*y.
\]
Now from the above two equations and the fact $S^*X=0$, we get
\begin{align*}
		(X^*X+Y^*Y)z&=X^*(x-SS^*x)+Y^*y
		\\&=X^*x+Y^*y
		\\&=z.
\end{align*}
It is follows that $z=0$ as $I-X^*X-Y^*Y$ is injective. Therefore, $y = Yz =0$.
Again for any $x \in \mathcal H\ominus \mathcal D_T$, we have 
\begin{align*}
		\begin{pmatrix}
			0 & 0\\
			0 & I-X^*X-Y^*Y
		\end{pmatrix}
		\begin{pmatrix}
			x \\
			0
		\end{pmatrix}=
		\begin{pmatrix}
			0\\
			0
		\end{pmatrix}.
	\end{align*}
Thus
\[
x \oplus 0 \in \ker(I-T^*T).
\]	
Therefore, 
$\text{ker}(I-TT^*)\subseteq \text{ker}(I-T^*T)$ that means $\mathcal D_T \subseteq \mathcal D_{T^*}$.

Conversely, let us assume that $T$ is a contraction such that $\mathcal D_T \subseteq \mathcal D_{T^*}$.
The argument follows in the same line as in Proposition $2.1$ in \cite{Hyonormal Cont.}  to prove that $T$ is of the form
\[\begin{pmatrix}
		S & X\\
		0 & Y
	\end{pmatrix}\in \mathcal B\left((\mathcal H\ominus \mathcal D_T)\oplus \mathcal D_T\right),
	\] 
where $S$ is an isometry on $\mathcal H\ominus \mathcal D_T$.   
\end{proof}

\begin{prop}\label{iff hyponormal}
Let $\mathcal M$ be a closed subspace of a Hilbert space $\mathcal H$ and $\clh = \clm \oplus \clm^{\perp}$. Let $T\in \mathcal B(\mathcal H)$ be a contraction of the form 
\[\begin{pmatrix}
		S & X\\
		0 & Y
	\end{pmatrix}\in \mathcal B\left(\mathcal M\oplus \mathcal M^\perp\right),
\] 
where $S \in \clb(\clm)$ is an isometry, $X \in \clb(\clm^{\perp}, \clm)$ and $Y \in \clb(\clm^{\perp})$ are contractions. If ${\ran} [T^*,T] \subseteq \mathcal M$, then $T$ is hyponormal. In addition, $X$ is a partial isometry with final space $\ker S^*$ if and only if $T$ is normal.
\end{prop}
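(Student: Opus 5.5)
The plan is to compute the self-commutator $[T^*,T]$ directly in block form with respect to $\clh=\clm\oplus\clm^\perp$. Then I would use the range hypothesis to kill all entries except the one in the $(1,1)$ position, and identify that remaining entry as a positive operator.

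Since $S^*S=I$, we have
\[
T^*T=\begin{pmatrix} I & S^*X\\ X^*S & X^*X+Y^*Y\end{pmatrix},\qquad
TT^*=\begin{pmatrix} SS^*+XX^* & XY^*\\ YX^* & YY^*\end{pmatrix},
\]
so
\[
[T^*,T]=\begin{pmatrix} I-SS^*-XX^* & S^*X-XY^*\\ X^*S-YX^* & X^*X+Y^*Y-YY^*\end{pmatrix}.
\]
The hypothesis $\ran[T^*,T]\subseteq\clm$ says precisely that the second row vanishes: $X^*S=YX^*$ and $X^*X+Y^*Y=YY^*$. Because $[T^*,T]$ is self-adjoint, its $(1,2)$ entry is the adjoint of its $(2,1)$ entry. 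Hence $S^*X-XY^*=0$ as well, and
\[
[T^*,T]=\begin{pmatrix} I-SS^*-XX^* & 0\\ 0 & 0\end{pmatrix}.
\]

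For hyponormality it remains to show $I-SS^*-XX^*\geq 0$ on $\clm$. This follows from $\|T^*\|\le 1$. We have $TT^*\le I$, and compressing to $\clm$ gives $P_\clm TT^*|_\clm=SS^*+XX^*\le I_\clm$. So $[T^*,T]\ge 0$ and $T$ is hyponormal.

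For the normality criterion, the display above shows that $T$ is normal if and only if $XX^*=I-SS^*$. Since $S$ is an isometry, $I-SS^*$ is the orthogonal projection of $\clm$ onto $\ker S^*$. The condition therefore says $XX^*$ is the projection onto $\ker S^*$. I would then invoke the standard fact that $X$ is a partial isometry if and only if $XX^*$ is a projection, in which case that projection is onto the final space $\ran X$.

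Conversely, if $X$ is a partial isometry with final space $\ker S^*$, then $XX^*=P_{\ker S^*}=I-SS^*$, so $[T^*,T]=0$. The only point needing care is the step recovering the $(1,2)$ entry from self-adjointness. No real obstacle is expected; the argument is essentially a bookkeeping computation.
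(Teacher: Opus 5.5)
Your proposal is correct and is essentially the paper's proof. Like the paper, you compute $[T^*,T]$ in block form, use $\ran[T^*,T]\subseteq\clm$ together with self-adjointness to reduce it to the single entry $P_{\ker S^*}-XX^*$, get positivity from the $(1,1)$ compression of $I-TT^*\geq 0$, and read off normality as $XX^*=P_{\ker S^*}$. The only difference is cosmetic: you keep the off-diagonal terms $S^*X$ and $X^*S$, which the paper's display drops silently (they vanish automatically, since $S$ isometric and $\|T\|\le 1$ force $\clm\subseteq\ker(I-T^*T)$), and disposing of them via the range hypothesis, as you do, is equally valid.
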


\begin{proof}
Suppose $T\in \mathcal B(\mathcal H)$ is a contraction of the form 
\[\begin{pmatrix}
		S & X\\
		0 & Y
	\end{pmatrix}\in \mathcal B\left(\mathcal M\oplus \mathcal M^\perp\right),
\] 
where $S \in \clb(\clm)$ is an isometry, $X \in \clb(\clm^{\perp}, \clm)$ and $Y \in \clb(\clm^{\perp})$ are contractions. Now
\begin{align*}
		I-TT^*
		&=\begin{pmatrix}
			P_{\ker S^*}-XX^* & -XY^*\\
			-YX^* & I-YY^*
		\end{pmatrix}.
\end{align*}
Therefore, $ P_{\ker S^*}-XX^* \geq 0$ as $I-TT^* \geq 0$. 
Again 
\begin{align*}
		[T^*,T]
		&=\begin{pmatrix}
			P_{\ker S^*}-XX^* & -XY^*\\
			-YX^* & X^*X+Y^*Y-YY^*
		\end{pmatrix}.
\end{align*}
Now $\ran [T^*,T]\subseteq \mathcal M$ yields $[T^*,T]|_{\mathcal M^\perp}=0$. Thus
\begin{align*}
		[T^*,T]
		&=\begin{pmatrix}
			P_{\ker S^*}-XX^* & 0\\
			0 & 0
		\end{pmatrix}.
\end{align*}
As $P_{\ker S^*}-XX^* \geq 0$, we have $[T^*,T]\geq 0.$

Suppose that $P_{\ker S^*}-XX^*= 0$, that is $XX^*=P_{\ker S^*}$ that means $X$ is a partial isometry with final space $\ker S^*$ if and only if $[T^*,T] =0$, that is $T$ is normal. 
\end{proof}

In the following example, we will see that if we discard the condition $\ran [T^*,T]\subseteq \mathcal M$, then $T$ may not satisfy $\mathcal D_T \subseteq \mathcal D_{T^*}$ and hence $T$ is not hyponormal. 

\begin{ex}
Let us consider the operator $T\in \mathcal B(\ell^2(\mathbb N)\oplus \ell^2(\mathbb N))$ defined as 
\[
\begin{pmatrix}
	S & 0\\
	0 & Y
\end{pmatrix},
\] 
where $S$ is the unilateral shift on $\ell^2(\mathbb N)$ and $Y$ is a nilpotent operator on $\ell^2(\mathbb N)$ defined as
\[
Y(e_j) = 
\begin{cases} 
e_2 & \text{if } j =1  \\
0 & \text{if } j > 1, 
\end{cases}
\] 
where $\{e_n\}$ is the standard orthonormal basis for $\ell^2(\mathbb N)$.
Now it is easy to check that 
\[
\ran [T^*,T] \nsubseteq  \ell^2(\mathbb N)\oplus \{0\}
\] 
and $\mathcal D_T \nsubseteq \mathcal D_{T^*}$.
\end{ex}

\begin{cor}
Let $T\in \mathcal B(\mathcal H)$ be a contraction with  $\mathcal D_T \subseteq \mathcal D_{T^*}$ and $\ran [T^*,T]\subseteq \mathcal D_{T^*}\ominus \mathcal D_T$. Then $T$ is a hyponormal contraction. 
\end{cor}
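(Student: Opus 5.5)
The corollary follows by combining Proposition~\ref{iff condition} with Proposition~\ref{iff hyponormal}, taking $\clm = \clh \ominus \cld_T$.

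The first step is to apply Proposition~\ref{iff condition}. Since $\cld_T \subseteq \cld_{T^*}$, the contraction $T$ has the block form
\[
T=\begin{pmatrix} S & X\\ 0 & Y\end{pmatrix}\in \clb\left((\clh\ominus\cld_T)\oplus\cld_T\right),
\]
where $S$ is an isometry on $\clh\ominus\cld_T$, and $X$ and $Y$ are contractions. This is exactly the shape required in Proposition~\ref{iff hyponormal} with $\clm=\clh\ominus\cld_T$ and $\clm^\perp=\cld_T$. The entries $X$ and $Y$ are contractions because they are compressions of the contraction $T$.

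The second step is to verify the range hypothesis $\ran[T^*,T]\subseteq\clm$. By assumption $\ran[T^*,T]\subseteq \cld_{T^*}\ominus\cld_T$. Every vector of $\cld_{T^*}\ominus\cld_T$ is orthogonal to $\cld_T$, so it lies in $\clh\ominus\cld_T=\clm$. Hence $\ran[T^*,T]\subseteq\clm$.

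Proposition~\ref{iff hyponormal} then gives $[T^*,T]\ge 0$, so $T$ is a hyponormal contraction. There is no real obstacle. The only point that needs care is identifying $\clm$ correctly, namely noting the inclusion $\cld_{T^*}\ominus\cld_T\subseteq\clh\ominus\cld_T$, so that the decomposition from Proposition~\ref{iff condition} matches the one used in Proposition~\ref{iff hyponormal}.
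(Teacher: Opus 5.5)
Your proposal is correct and follows the paper's route exactly: the paper's proof only cites Proposition~\ref{iff condition} and Proposition~\ref{iff hyponormal}. You also supply the detail the paper leaves implicit, namely taking $\clm=\clh\ominus\cld_T$ and observing that $\cld_{T^*}\ominus\cld_T\subseteq\clh\ominus\cld_T$, so the range hypothesis of Proposition~\ref{iff hyponormal} holds.
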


\begin{proof}
The proof follows immediately from Proposition \ref{iff condition} and Proposition \ref{iff hyponormal}.
\end{proof}


\section{Main Result}

In this section, we will give a complete description of a hyponormal contraction $T$ satisfying $\ran [T^*,T] \subseteq \mathcal D_{T^*}\ominus \mathcal D_T$. 
Before proceeding to the main result, we recall the spectral measure (cf. \cite{Conway}).

Let $\Omega$ be a non empty set, $\Sigma$ be a $\sigma$-algebra of subsets of $\Omega$
and $\clh$ be a Hilbert space. A function $E: \Sigma \rightarrow \clb(\clh)$ 
is called a spectral measure for $(\Omega, \Sigma, \clh)$ if for every $\Delta \in \Sigma$, $E(\Delta)$ is an orthogonal projection on $\clh$, and $E$ satisfies:
\begin{center}
\begin{enumerate}
\item $E(\Omega) = I_{\clh}, \qquad E(\varnothing) = 0;$
\item $ E\left(\bigsqcup_{i=1}^{\infty}\Delta_i\right)=\sum_{i=1}^{\infty} E(\Delta_i)$
\end{enumerate}
\end{center}
for every sequence of pairwise disjoint sets $\Delta_i \in \Sigma$, where the series converges in the strong operator topology. It is easy to see that $E$ is monotone, that is, for $\Delta_1, \Delta_2 \in \Sigma$
\[
E(\Delta_1) \leq E(\Delta_2) \quad \mbox{~if~} \Delta_1 \subseteq \Delta_2,
\]
and multiplicative
\[
E(\Delta_1 \cap \Delta_2) = E(\Delta_1)E(\Delta_2).
\]
For a self-adjoint operator $A \in \clb(\clh)$, the spectral theory says that there exists a unique compact real spectral measure $E$ on the Borel $\sigma$-algebra, $\Sigma_{\sigma(A)}$ of subsets of $\sigma(A)$ such that
\[
A = \int_{\sigma(A)} \lambda~dE(\lambda).
\]

Now we are in a position to state the main result.  
\begin{thm}\label{main 2}
Let $T$ be a hyponormal contraction on $\clh$ such that $\ran [T^*,T] \subseteq \mathcal D_{T^*}\ominus \mathcal D_T$. Then, $T$ is unitarily equivalent to 
\[
\begin{pmatrix}
	M_z \otimes I  & P_{\mathbb C}\otimes \tilde D  &0\\
	0         &      M_z^*\otimes D  & 0\\
	0           &    0               & N 
	\end {pmatrix}	 
	\in \mathcal B(H^2_{\mathcal E_1}(\mathbb D) \oplus H^2_{\mathcal E_2}(\mathbb D) \oplus \mathcal K),
\]
where $\mathcal E_1,\mathcal E_2,\mathcal K$ are Hilbert spaces such that $\mathcal E_2$ is isometrically embedded in $\mathcal E_1$, and there exists a sequence of disjoint Borel sets $\{ \Delta_i\}_{i=1}^{\infty}$ in the Borel $\sigma$-algebra $\Sigma_{\sigma([T^*, T]) \setminus \{ 0, 1\}}$ such that $D = \bigoplus_{i=1}^{\infty}C_{\Delta_i}^{1 \over 2}$ on $\cle_2$, and $\tilde D = \begin{bmatrix}
    \bigoplus_{i=1}^{\infty} U C_{\Delta_i}^{1 \over 2} \\
   0 
\end{bmatrix}
\in \mathcal B (\mathcal E_2,\mathcal E_2\oplus (\mathcal E_1\ominus \mathcal E_2))$, where $C_{\Delta_i}$'s are positive invertible operators and $U$ is unitary, and also $N:\mathcal K \to \mathcal K $ is normal.
\end{thm}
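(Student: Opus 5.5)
We begin by reducing to the block form of Proposition \ref{iff condition}. Since $T$ is a hyponormal contraction, $\mathcal D_T\subseteq\mathcal D_{T^*}$. Write $\clh=\clm\oplus\clm^\perp$ with $\clm=\clh\ominus\cld_T=\ker D_T$, so that
\[
T=\begin{pmatrix} S & X\\ 0 & Y\end{pmatrix},
\]
where $S$ is an isometry on $\clm$. The hypothesis $\ran[T^*,T]\subseteq \cld_{T^*}\ominus\cld_T\subseteq\clm$ lets us use the computation in the proof of Proposition \ref{iff hyponormal}. That computation gives $XY^*=0$ and $X^*X+Y^*Y=YY^*$, and it shows
\[
[T^*,T]=(P_{\ker S^*}-XX^*)\oplus 0 .
\]
From the proof of Proposition \ref{iff condition} we also have $S^*X=0$, so $\ran X\subseteq\ker S^*$. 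Moreover $I-X^*X-Y^*Y$ is injective on $\cld_T$, because $\ker(I-T^*T)=\clm$.

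Next we apply the Wold decomposition to $S$. This gives $\clm=H^2_{\ker S^*}(\D)\oplus\clk_u$, where $S|_{\clk_u}$ is unitary and $\ker S^*$ sits as the constants. Since $X$ maps into $\ker S^*$, the unitary part $\clk_u$ is orthogonal to $\ran X$. It is invariant under $T$ and $T^*$, so it reduces $T$, and it goes into the normal summand $N$.

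Now we analyse $Y$ on $\cld_T$. The relations $XY^*=0$ and $YY^*-Y^*Y=X^*X$ say that $Y^*$ is hyponormal with self-commutator $X^*X$. We would show that $Y^*$ has the form $M_z\otimes D$-type weighted shift plus a normal part. Set
\[
\cle_2=\overline{\ran}X^*=(\ker X)^\perp\subseteq\cld_T .
\]
Since $XY^*=0$, we have $\ran Y^*\subseteq\ker X$, so $\cle_2\perp\ran Y^*$; that is, $\cle_2\subseteq\ker Y$. We then aim to show that the spaces $\cle_2, Y^*\cle_2, Y^{*2}\cle_2,\dots$ are mutually orthogonal, and that $Y^*$ acts on each level as a copy of $D$, where $D=(X^*X)^{1/2}|_{\cle_2}$ up to a unitary identification. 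The identity $YY^*=X^*X+Y^*Y$ should give $\|Y^*h\|^2=\|Xh\|^2+\|Yh\|^2$. Iterating this on $h\in\cle_2$ (where $Yh=0$) should produce the weight $\|Y^{*n}h\|=\|Dh\|$-type relations.

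For this to close up we need $Y$ to act on $Y^*\cle_2$ by $Y Y^* h = X^*X h$. That requires a spectral splitting of $X^*X$. The eigenvalue $1$ part corresponds to partial-isometry behaviour. The $0$ part is $\ker X$, and modulo the wandering subspace it joins the normal summand. The remaining part of $\sigma([T^*,T])\setminus\{0,1\}$ is cut into disjoint Borel sets $\Delta_i$ on which the spectral pieces $C_{\Delta_i}$ are bounded below, hence invertible; for instance take $\Delta_i=\sigma\cap(1/(i+1),1/i]$. Here we use that $XX^*$ and $X^*X$ are unitarily equivalent off their kernels through the polar decomposition $X=V|X|$. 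That is the source of the unitary $U$ in $\tilde D$, and of the isometric embedding of $\cle_2$ into $\cle_1=\ker S^*$ via $V$. With this in place, $P_{\mathbb C}\otimes\tilde D$ represents $X$: it takes the constant term of the $H^2_{\cle_2}$ component to the constants of $H^2_{\cle_1}$.

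Finally, we set $H^2_{\cle_2}(\D)=\bigvee_n Y^{*n}\cle_2$. We check that this space together with $H^2_{\cle_1}$ reduces $T$, and we show that the orthogonal complement of everything is a reducing subspace on which $[T^*,T]=0$, so $T$ is normal there; this complement, together with $\clk_u$, forms $\clk$. The main obstacle is this last step: showing that $Y^*$ restricted to $\bigvee_n Y^{*n}\cle_2$ is exactly $M_z\otimes D$, with the same $D$ at every level and mutually orthogonal levels, and showing that the complement is reducing. We expect to deduce the orthogonality and the constancy of the weights from $X^*X+Y^*Y=YY^*$, $XY^*=0$, and the injectivity of $I-X^*X-Y^*Y$. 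We would try induction on $n$, using the functional calculus of $X^*X$ on each $\Delta_i$ so that all operators involved commute with $C_{\Delta_i}$.
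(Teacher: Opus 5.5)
\textbf{There is a genuine gap.} Your framework is the paper's. It consists of the block form with $S^*X=0$, the relations $XY^*=0$ and $YY^*-Y^*Y=X^*X$, the Wold decomposition of $S$, the space $\mathcal E_2=\overline{\ran}X^*\subseteq\ker Y$ generating $\bigvee_n Y^{*n}\mathcal E_2$, and the polar decomposition of $X$ as the source of $U$ and of the embedding of $\mathcal E_2$ into $\ker S^*$. But the step you call the main obstacle is the actual content of the theorem, and you leave it at ``we expect to deduce''.

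The missing idea is to multiply the commutator relation on the right by $Y^*$ and use $XY^*=0$:
\[
(YY^*-Y^*Y)Y^*=X^*XY^*=0,\quad\text{so}\quad YY^{*n}=Y^{*(n-1)}YY^*\ \ (n\ge 1),\qquad Y^mY^{*n}=Y^{m-n}(YY^*)^n\ \ (m\ge n).
\]
Add the fact that $\mathcal E_2$ is invariant, hence reducing, for the self-adjoint operator $YY^*$. On $\mathcal E_2$ it equals $C:=X^*X|_{\mathcal E_2}$, because $\mathcal E_2\subseteq\ker Y$ and $X^*X$ maps into $\mathcal E_2$. For $w,w'\in\mathcal E_2$ this gives three facts at once:
\begin{itemize}
\item For $m>n$, $\langle Y^{*n}w,Y^{*m}w'\rangle=\langle Y^{m-n}C^nw,w'\rangle=0$, since $C^nw\in\ker Y$. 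So the levels are mutually orthogonal.
\item $\|Y^{*n}w\|^2=\langle C^nw,w\rangle$. These are the weights.
\item $YY^{*n}w=Y^{*(n-1)}Cw$. So $\bigvee_nY^{*n}\mathcal E_2$ is $Y$-invariant, and $Y$ acts there as $M_z^*\otimes C^{1/2}$.
\end{itemize}
No induction using injectivity of $I-X^*X-Y^*Y$ is needed. That injectivity only rules out the eigenvalue $1$ of $C$ (i.e.\ $\lambda=0$).

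Your remark that $YY^*h=X^*Xh$ ``requires a spectral splitting'' misplaces the difficulty. On $\ker Y$ that identity is immediate from $YY^*=X^*X+Y^*Y$. What is needed is control of $Y$ on the higher levels $Y^{*n}\mathcal E_2$ for $n\ge 2$, and only the displayed identity gives it.

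Once $\mathcal M=\bigvee_nY^{*n}\mathcal E_2$ reduces $Y$, the complement is routine. We have $\mathcal D_T\ominus\mathcal M\subseteq\mathcal E_2^\perp=\ker X$, so $T$ and $T^*$ act on it as $Y$ and $Y^*$. The restriction $Z$ is normal because $[T^*,T]$ vanishes on $\mathcal D_T$.

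The spectral splitting is needed only to make the weights invertible, so that $\sum_nY^{*n}w_n\mapsto\sum_nz^n\otimes C_{\Delta_i}^{n/2}w_n$ is onto $H^2_{\mathcal W_i}(\D)$. Since $C$ corresponds to $1-\lambda$ under $V$, the pieces must keep $\lambda\in\sigma([T^*,T])$ away from $1$. The paper uses $\Delta_i\subseteq(1-\frac1i,1-\frac1{i+1}]$. Your intervals $(\frac1{i+1},\frac1i]$ work only in the variable $1-\lambda$. Read in $\lambda$, the piece $(\frac12,1)$ gives a $C_{\Delta_1}$ that is not bounded below.

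You could also skip the splitting. Write $Y^{*n}|_{\mathcal E_2}=W_nC^{n/2}$, where $W_n$ is an isometry of $\mathcal E_2$ onto $\overline{Y^{*n}\mathcal E_2}$; this uses $Y^nY^{*n}=(YY^*)^n$ and the injectivity of $C$ on $\mathcal E_2$. The relation above gives $YW_n=W_{n-1}C^{1/2}$ on the dense range of $C^{n/2}$, and $\bigoplus_nW_n$ is the required unitary from $H^2_{\mathcal E_2}(\D)$ onto $\mathcal M$.
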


\begin{proof}
Suppose that $T$ is a hyponormal contraction on $\clh$. Then
\[
0\leq[T^*,T]=(I-TT^*)-(I-T^*T)\leq I-TT^*\leq I.
\]
Thus $[T^*,T]$ is a positive contraction on $\clh$. Since $\mathcal D_T \subseteq \mathcal D_{T^*}$ and $\ran [T^*,T]\subseteq \mathcal D_{T^*}\ominus \mathcal D_T$, by Lemma \ref{lem1-Equality}, we get  $\overline{\ran}[T^*,T] =  \mathcal D_{T^*}\ominus \mathcal D_T$.

As $T$ is a hyponormal contraction, using Proposition 2.1 in \cite{Hyonormal Cont.}, we can write $T$ on $\mathcal H= (\mathcal H \ominus \mathcal D_T)\oplus \mathcal D_T$ as 
\[\begin{pmatrix}
		S & X\\
		0  & Y
\end{pmatrix}\] 
such that $S$ is an isometry on $\mathcal H\ominus \mathcal D_{T}$ and $S^*X=0$.
Note that $\ran S=\mathcal H\ominus \mathcal D_{T^*}$. Thus
\[
\mathcal D_{T^*}\ominus \mathcal D_T=(\mathcal H\ominus \mathcal D_{T})\ominus(\mathcal H\ominus \mathcal D_{T^*})=(\ran S)^\perp \cap (\mathcal H\ominus \mathcal D_{T}) = 
\ker S^*.
\]
Now
\[
[T^*,T]=
		\begin{pmatrix}
		P_{\ker{S^*}}-XX^* & -XY^*\\
		-YX^* & X^*X+Y^*Y-YY^*
		\end{pmatrix}
\]
on $\mathcal H = (\mathcal H \ominus \mathcal D_T)\oplus \mathcal D_T$.
Since
\[
\ran [T^*,T] \subseteq \mathcal D_{T^*}\ominus \mathcal D_T \subseteq \mathcal H\ominus \mathcal D_T,
\]
we have
\begin{equation}\label{Matrix equation}
XY^*=0 ~~ \mbox{~and~~} ~~ X^*X+Y^*Y-YY^*=0.
\end{equation}
Note that $\mathcal H\ominus \mathcal D_T$ is a reducing subspace for $[T^*,T]$ and hence
\[
[T^*,T]= [T^*,T] \vert_{\clh \ominus \mathcal D_T}\oplus \{0\}|_{\mathcal D_T}.
\]
Therefore,
\[
\sigma([T^*,T]) \setminus \{ 0 \} = \sigma([T^*,T] \vert_{\clh \ominus \mathcal D_T}) \setminus \{ 0 \}.
\]
Let $\Sigma_{\sigma([T^*,T] \vert_{\clh \ominus \mathcal D_T})}$ be the Borel 
$\sigma$-algebra of subsets of $\sigma([T^*,T] \vert_{\clh \ominus \mathcal D_T})$.
By spectral theory of $[T^*,T]|_{ \clh \ominus \mathcal D_T}$, we have 
\begin{align*}
	[T^*,T]|_{ \clh \ominus \mathcal D_T}
	&=\int_{\sigma([T^*,T]|_{ \clh \ominus \mathcal D_T})} \lambda \,dE({\lambda}),
	\\&=\int_{\sigma([T^*,T])\setminus \{0\}} \lambda \,dE({\lambda}),
\end{align*}
where $E$ is the spectral measure for $(\sigma([T^*,T] \vert_{\clh \ominus \mathcal D_T}), \mathcal B (\sigma([T^*,T] \vert_{\clh \ominus \mathcal D_T})) ,\clh \ominus \mathcal D_T)$. Thus
\[
P_{ \overline{\ran}[T^*,T]}= P_{\mathcal D_{T^*}\ominus \mathcal D_T}=P_{\ker S^*}=\int_{\sigma([T^*,T])\setminus\{0\}} 1 \,dE({\lambda}).
\]
Now $P_{\mathcal H\ominus \mathcal D_T}[T^*,T]|_{\mathcal H\ominus \mathcal D_T}=[T^*,T]|_{\mathcal H\ominus \mathcal D_T}=P_{\ker{S^*}}-XX^*$ infers that
\[
XX^*=\left(\int_{\sigma([T^*,T])\setminus\{0\}} (1-\lambda) \,dE({\lambda})\right).
\]
From polar decomposition of $X^*$, we get 
\[
X^*=V\sqrt{XX^*},
\]
where $V: \clh \ominus \mathcal D_T \rightarrow \mathcal D_T $ is a partial isometry with the initial space $(\ker X^*)^{\perp} = \overline{\ran}{X}$. Hence,
\begin{align*}
X^*&=V\left(\int_{\sigma([T^*,T])\setminus\{0\}} \sqrt{1-\lambda} \,dE({\lambda})\right).
\end{align*}
Therefore,
\begin{align*}
X=\left(\int_{\sigma([T^*,T])\setminus\{0\}} \sqrt{1-\lambda} \,dE({\lambda})\right)V^*,
\end{align*}
and
\begin{align*}
X^*X=V\left(\int_{\sigma([T^*,T])\setminus\{0,1\}} (1-\lambda) \,dE({\lambda})\right)V^*.
\end{align*}
Define 
\[
f(\lambda)=
\begin{cases}
		1-\lambda, & \lambda\in\sigma([T^*,T]|_{\mathcal H\ominus \mathcal D_T})\setminus\{0\},\\
		0, & \lambda=0.
\end{cases}
\]
Then,
\[
XX^*=f([T^*,T]|_{\mathcal H\ominus \mathcal D_T}).
\]
Therefore, 
\begin{align*}
\ker XX^*=	E(f^{-1}\{0\})(\mathcal H\ominus \mathcal D_T)=E(\{0,1\})(\mathcal H\ominus \mathcal D_T).
\end{align*}
Hence
\[
\overline {\ran} XX^*=E\left(\sigma([T^*,T])\setminus\{0,1\}\right)(\mathcal H\ominus \mathcal D_T).
\]

Let $\Sigma_{\sigma([T^*, T]) \setminus \{ 0, 1\}} = \Sigma_{\sigma([T^*, T]|_{\clh \ominus \cld_T})}  \cap ({\sigma([T^*, T]) \setminus \{ 0, 1\}})$ be the Borel $\sigma$-algebra of subsets of ${\sigma([T^*, T]) \setminus \{ 0, 1\}}$. Consider for each $i \geq 1$
\[
\mathcal M_i=\overline{\mbox{span}}\left\{Y^{*n}VE(\Delta_i)(\clh \ominus \cld_{T}): ~ n\in \mathbb Z_+\right\},
\]
where the Borel set $\Delta_i$ are
\[
\Delta_i =
	\begin{cases}
		(\sigma([T^*,T])\setminus\{0,1\})\cap\left(0,\frac{1}{2}\right], & i=1,
		\\
		(\sigma([T^*,T])\setminus\{0,1\})\cap\left(1-\frac{1}{i},\,1-\frac{1}{i+1}\right], & i\geq 2
	\end{cases}
\]
such that 
\[
\sigma([T^*,T])\setminus\{0,1\}=\bigsqcup_{i=1}^{\infty}\Delta_i.
\]

Now
\begin{align*}
E\left(\Delta_i\right)(\mathcal H\ominus \mathcal D_T) \subseteq \overline{\ran}{XX^*} = \overline{\ran}\sqrt{XX^*}
\end{align*}
as $ E\left(\Delta_i\right)(\mathcal H\ominus \mathcal D_T)\subseteq E\left(\sigma([T^*,T])\setminus\{0,1\}\right)(\mathcal H\ominus \mathcal D_T)$. Since $\overline{\ran}\sqrt{XX^*} = \overline{\ran}{X}$, $V$ acts isometrically on $E(\Delta_i)(\mathcal H\ominus \mathcal D_T)$.
Also, from $(\ref{Matrix equation})$, $XY^*=0$ which implies $YX^*=0$. Therefore,
\[
\overline {\ran}{X^*} \subseteq {\ker} Y \implies V \overline {\ran} \sqrt{XX^*} \subseteq \ker Y \implies  VE(\sigma([T^*,T])\setminus\{0,1\})(\clh \ominus \cld_{T})
\subseteq {\ker}Y.
\]
Thus
\[
 YVE(\Delta_i)(\clh \ominus \cld_{T}) =0 \mbox{~~for~~} \Delta_i \in \Sigma_{\sigma([T^*, T]) \setminus \{ 0, 1\}}.
\] 
Again, from the equation $(\ref{Matrix equation})$, $[Y,Y^*]=X^*X$ and $YVE(\Delta_i)(\clh \ominus \cld_{T}) =0$ for $\Delta_i \in \Sigma_{\sigma([T^*, T]) \setminus \{ 0, 1\}}$, we get
\begin{align*}
YY^*VE(\Delta_i)(\clh \ominus \cld_{T}) &=X^*XVE(\Delta_i)(\clh \ominus \cld_{T})
\\&=V\left(\int_{\sigma([T^*,T])\setminus\{0,1\}} (1-\lambda) \,dE({\lambda})\right)V^*VE(\Delta_i)(\clh \ominus \cld_{T})
\\&=V\left(\int_{\sigma([T^*,T])\setminus\{0,1\}} (1-\lambda)\, dE(\lambda)\right)E(\Delta_i)(\clh \ominus \cld_{T})
\\&=VE(\Delta_i)\left(\int_{\sigma([T^*,T])\setminus\{0,1\}} (1-\lambda)\, dE(\lambda)\right)(\clh \ominus \cld_{T})
\\& \subseteq V E(\Delta_i)(\clh \ominus \cld_{T}).
\end{align*}
Now $XY^*=0$ and $[Y,Y^*]=X^*X$ gives 
\begin{align*}
(YY^*-Y^*Y)Y^* & =X^*XY^* \\
 YY^{*2}-Y^*YY^* &=0 
\implies YY^{*2} =Y^*YY^*.
\end{align*}
Using induction we have
\begin{equation}\label{recur-1}
YY^{*n}=Y^{*(n-1)}YY^* \quad (n\geq 1).
\end{equation}
This relation gives 
\begin{equation}\label{recur-2}
Y^mY^{*n}=Y^{(m-n)}(YY^*)^n \quad (m\geq n\geq 1).
\end{equation}
Now for $h\in \clh \ominus \cld_{T}$ and $i \geq 1$, we have
\begin{align*}
YY^{*n}V E(\Delta_i)h=Y^{*(n-1)}YY^*V E(\Delta_i)h \subseteq Y^{*(n-1)}V E(\Delta_i)(\clh \ominus \cld_{T}),
\end{align*}
as $ YY^*VE(\Delta_i)(\clh \ominus \cld_{T}) \subseteq V E(\Delta_i)(\clh \ominus \cld_{T})$. Thus each $\mathcal M_i$ is an invariant subspace for $Y$. Clearly, $\mathcal M_i$ is invariant for $Y^*$, and hence $\clm_i$ reduces $Y$ for each $i\geq 1$.

Again the recurrence relation $(\ref{recur-1})$ infers that for $h\in \clh \ominus \cld_T$,
\begin{align*}
YY^{*n}VE(\Delta_i)h =Y^{*(n-1)}YY^*VE(\Delta_i)h =Y^{*(n-1)}C_{\Delta_i}VE(\Delta_i)h,
\end{align*}
where $C_{\Delta_i}=YY^*|_{VE(\Delta_i)(\clh \ominus \cld_T)}=X^*X|_{VE(\Delta_i)(\clh \ominus \cld_T)}$ as $VE(\Delta_i)(\clh \ominus \cld_T)$ reduces $YY^*$. Note that each 
$\Delta_i$ is bounded away from $1$, that is there exists $\epsilon >0$ such that $1-\lambda\geq \epsilon $ for all $\lambda \in \Delta_i$. Suppose $y=VE(\Delta_i)h$ for some $h\in \mathcal H\ominus \mathcal D_T$. Then
\begin{align*}
\| C_{\Delta_i}y\|^2=\| X^*X y \|^2
		&=\left\|V\left(\int_{\sigma([T^*,T]|_{\mathcal H\ominus \mathcal D_T})\setminus\{0\}} (1-\lambda) \,dE({\lambda})\right)V^*y\right\|^2
		\\&=\left\|V\left(\int_{\sigma([T^*,T]|_{\mathcal H\ominus \mathcal D_T})\setminus\{0\}} (1-\lambda) \,dE({\lambda})\right)E(\Delta_i)h\right\|^2
		\\&=\left\|\left(\int_{\Delta_i} (1-\lambda) \,dE({\lambda})\right)h\right\|^2
		\\&\geq \epsilon^2 \left\| VE(\Delta_i)h\right\|^2
		=\epsilon^2 \|y\|^2.
\end{align*} 
As $C_{\Delta_i}$ is positive and bounded below, it is invertible. 
Also, for each $n \geq 1$
\begin{align*}
		\|Y^{*n}y\|^2
		=\langle Y^nY^{*n}y,y \rangle
		=\langle (YY^*)^ny,y \rangle
		=\langle C_{\Delta_i}^ny,y \rangle
	   =\|C_{\Delta_i}^\frac{n}{2}y\|^2
	   \geq \frac{1}{\|C_{\Delta_i}^{-n}\|} \|y\|^2.
\end{align*}
It follows that $Y^{*n}|_{VE(\Delta_i)(\mathcal H\ominus \mathcal D_T)}:VE(\Delta_i)(\mathcal H\ominus \mathcal D_T)\to \mathcal D_T$ is bounded below and hence the space $Y^{*n}VE(\Delta_i)(\mathcal H\ominus \mathcal D_T)$ is closed for each $n \geq 1$. For $n=0$, the space $VE(\Delta_i)(\mathcal H\ominus \mathcal D_T)$ is closed as $V$ is a partial isometry.

We will next show that $\mathcal M_i \perp  \mathcal M_j$ whenever $\Delta_i\cap \Delta_j=\emptyset $ for $i \neq j$.

\NI From the property of spectral measure, we have
\[
E(\Delta_i)E(\Delta_j)=0 \quad \mbox{~whenever~} \Delta_i\cap \Delta_j=\emptyset \quad (i \neq j).
\]
Without loss of generality, let us assume that $m\geq n$ for $n,m\in \mathbb Z_+$ and $h, g \in \clh \ominus \cld_T$. Then, $Y^mY^{*n}VE(\Delta_i)h = Y^{m-n}VE(\Delta_i)h'$ for some $h^{'} \in \clh$ by $(\ref{recur-2})$ and for $i \neq j$ 
\begin{align*}
\langle Y^{*n}VE(\Delta_i)h, Y^{*m}VE(\Delta_j)g\rangle
		&=\langle  Y^mY^{*n}VE(\Delta_i)h,VE(\Delta_j)g\rangle
		\\&=\langle  Y^{m-n}VE(\Delta_i)h',VE(\Delta_j)g\rangle
		\\&=
		\begin{cases}
			\langle 0,VE(\Delta_j)g\rangle, &m\geq n+1\\
			\\\langle  E(\Delta_j)E(\Delta_i)h',g\rangle,  &m=n
		\end{cases}
		\\&=0.
\end{align*}
Therefore, for $i \geq 1$ 
\[
\mathcal M_i=\bigoplus \limits_{n\geq 0}Y^{*n}(VE(\Delta_i)(\clh \ominus \cld_T)).
\]
Define
\[
\mathcal M=\bigoplus_{i\in \mathbb N} \mathcal M_i.
\]
Clearly, $\mathcal M$ reduces $Y$ as each $\clm_i$ reduces $Y$.

We know that
\[T=
	\begin{pmatrix}
		S & X\\
		0 & Y
\end{pmatrix}\] 
on $\mathcal H=(\mathcal H\ominus \mathcal D_T) \oplus \mathcal D_T$, where
$S$ is an isometry on $\mathcal H\ominus \mathcal D_T$. Now Wold decomposition for the isometry $S$ yields $\mathcal H\ominus \mathcal D_T= \mathcal H_1 \oplus \mathcal H_2$ such that $\mathcal H_1, \clh_2$ reduce $S$, $S|_{\mathcal H_1}$ is unitary, and $S|_{\mathcal H_2}$ is a unilateral shift. Since $T{\mathcal D_T} \subseteq {\mathcal D_{T^*}}$, $P_{\mathcal H_1}T|_{\mathcal D_T}=0$. Also $X|_{\mathcal D_T\ominus \mathcal M}=0$ as $\overline{\ran}X^*=VE\left(\sigma [T^*,T]\setminus\{0,1\}\right)(\clh \ominus \cld_T) \subseteq \mathcal M$, that is $\mathcal D_T\ominus \mathcal M \subseteq \ker X$. Denote $Z= Y|_{\mathcal D_T\ominus \mathcal M}$. 

\NI Then
\begin{align}
T &=  \begin{pmatrix}
			S|_{\mathcal H_1} & 0 & 0 & 0 \\
			0 & S|_{\mathcal H_2} & P_{\clh_2}X|_{\mathcal M} & 0 \\
			0 & 0 & Y|_{\mathcal M} & 0 \\
			0 & 0 & 0 & Z 
		\end{pmatrix}
		\in \mathcal B(\mathcal H_1\oplus \mathcal H_2\oplus \mathcal M \oplus (\mathcal D_T\ominus \mathcal M)) \nonumber \\
		&=\begin{pmatrix}
			S|_{\mathcal H_2} & P_{\clh_2}X|_{\mathcal M} & 0 \\
			0 & Y|_{\mathcal M} & 0 \\
			0 & 0 & S|_{\mathcal H_1}\oplus Z 
		\end{pmatrix}\in \mathcal B( \mathcal H_2\oplus \mathcal M \oplus (\mathcal H_1\oplus(\mathcal D_T\ominus \mathcal M))).
\end{align}
Thus
\[
[T^*,T]=\begin{pmatrix}
		 P_{\ker(S|_{\mathcal H_2})^*}-( P_{\clh_2}X|_{\mathcal M}) (P_{\clh_2}X|_{\mathcal M})^*& 0 &0 \\
		 0 & 0 & 0 \\
		 0 & 0 &0 \oplus [Z^*, Z]	
	\end{pmatrix}\in \mathcal B( \mathcal H_2\oplus \mathcal M \oplus (\mathcal H_1\oplus(\mathcal D_T\ominus \mathcal M))).
\]
Since $ [T^*,T]|_{\mathcal D_T}=0$, $Z$ is normal. Also,
\[
P_{\ker (S|_{\mathcal H_2})^*} =P_{\ker S^*}=\int_{\sigma([T^*,T])\setminus\{0\}} 1 \,dE({\lambda}).
\]
Thus 
\[
P_{\ker (S|_{\mathcal H_2})^*}(\mathcal H \ominus \cld_T) =E(\sigma([T^*,T])\setminus\{0\})(\clh \ominus \cld_T).
\]
Write 
\[
\mathcal E_1=E(\sigma([T^*,T])\setminus\{0\})(\clh \ominus \cld_T) \quad \mbox{and} \quad \mathcal E_2=VE(\sigma([T^*,T])\setminus\{0,1\})(\clh \ominus \cld_T).
\]
Clearly, $\mathcal E_2$ is isometrically embedded in $\mathcal E_1$.

We now prove that each $\mathcal M_i$ is isomorphic to $H^2_{\mathcal W_i}(\mathbb D)$, where $\mathcal W_i=VE(\Delta_i)(\clh \ominus \cld_T)$ for $i \geq 1$.

\NI To do so, define a linear map $U_i:\mathcal M_i \to H^2_{\mathcal W_i}(\mathbb D)$ for $i \geq 1$ as
\[
U_i\left(\sum_{n\geq 0}Y^{*n}w_n\right)=\sum_{n\geq 0}z^n \otimes C_{\Delta_i}^{\frac{n}{2}}w_n \quad (w_n \in \clw_i).
\]
Then, using the equation $(\ref{recur-2})$, we have
\begin{align*}
\left\|U_i (\sum_{n\geq 0}Y^{*n}w_n )\right\|^2
&=\left\|\sum_{n\geq 0}z^n\otimes C_{\Delta_i}^{\frac{n}{2}}w_n\right\|^2
\\&=\sum_{n\geq 0}\left\langle C_{\Delta_i}^{n}w_n, w_n \right\rangle_{\mathcal W_i}
\\&=\sum_{n\geq 0}\left\langle (YY^*)^n w_n, w_n \right\rangle_{\mathcal W_i}
\\&=\sum_{n\geq 0}\left\langle Y^nY^{*n} w_n, w_n \right\rangle_{\mathcal W_i}
\\&=\sum_{n\geq 0}\|Y^{*n}w_n\|_{\mathcal W_i}^2
\\&= \left\| \sum_{n\geq 0}Y^{*n}w_n \right\|^2.
\end{align*}
This shows that $U_i$ is isometry. Since each $C_{\Delta_i}$ is invertible, $U_i$ is also 
onto. Hence each $U_i$ is a unitary. Therefore,
\begin{align*}
U_iY|_{\clm_i}U_i^*\left(\sum_{n\geq 0}z^n \otimes w_n\right)
&=U_i\left(\sum_{n\geq 0}YY^{*n}C_{\Delta_i}^{-\frac{n}{2}}w_n\right)\\
&=U_i\left(\sum_{n\geq 1}Y^{*(n-1)}C_{\Delta_i}^{1-\frac{n}{2}} w_n\right)\\
&=\sum_{n\geq 1}z^{n-1}\otimes C_{\Delta_i}^{\frac{1}{2}} w_n\\
&=\left(M^*_{z}\otimes C_{\Delta_i}^{\frac{1}{2}}\right)\left(\sum_{n\geq 0}z^{n}\otimes w_n\right).
\end{align*}

Denote
\[
\mathcal M=\bigoplus\limits_{i \geq 1}\mathcal M_i,
\]
and 
\[
H^2_{\mathcal E_2}(\mathbb D)=H^2_{\bigoplus\limits_{i \geq 1}\mathcal W_i}(\mathbb D)=\bigoplus\limits_{i \geq 1} H^2_{\mathcal W_i}(\mathbb D).
\]
We now define
\[
U=\bigoplus\limits_{i \geq 1}U_i.
\] 
Then $U: \mathcal M \to  H^2_{\mathcal E_2}(\mathbb D)$ is a unitary.
Let us define a linear map 
\[
\tilde{U}:H^2_{\mathcal E_1}(\mathbb D) \oplus H^2_{\mathcal E_2}(\mathbb D) \oplus (\mathcal H_1\oplus(\mathcal D_T\ominus \mathcal M))\to \mathcal H_2\oplus\mathcal M \oplus (\mathcal H_1\oplus (\mathcal D_T\ominus \mathcal M))
\]
by
\[
\tilde{U}\left(z^k h_1 \oplus z^\ell h_2 \oplus f \right)=
(S|_{\mathcal H_2})^k h_1 \oplus U^{*}(z^\ell h_2) \oplus f, 
\]
where $h_1\in \mathcal E_1, h_1\in \mathcal E_2,$ and $f \in \mathcal H_1\oplus (\mathcal D_T\ominus \mathcal M).$

Suppose $y=VE(\Delta_i)h$ for some $h\in \mathcal H\ominus\mathcal D_T$. Then we have 
\begin{align*}
		Xy&=\left(\int_{\sigma([T^*,T]|_{\mathcal H\ominus \mathcal D_T})\setminus\{0\}} \sqrt{1-\lambda} \,dE({\lambda})\right)V^*y
		\\&=\left(\int_{\sigma([T^*,T]|_{\mathcal H\ominus \mathcal D_T})\setminus\{0\}} \sqrt{1-\lambda} \,dE({\lambda})\right)E(\Delta_i)h
		\\&=\left(\int_{E(\Delta_i)} \sqrt{1-\lambda} \,dE({\lambda})\right)h
		\\ &\in E(\Delta_i)(\mathcal H\ominus \mathcal D_T)\subseteq \mathcal E_1.
\end{align*}
Therefore,
\begin{align*}
\tilde U^* P_{\mathcal H_2}X|_{\mathcal M_i}\tilde U(z^n \otimes w_n)
&= \tilde U^*P_{\mathcal H_2}\left(X|_{\mathcal M_i}Y^{*n}C_{\Delta_i}^{-\frac{n}{2}}w_n\right)\\
&=\begin{cases}
1\otimes Xw_0, & n=0\\
0, & n\geq 1
\end{cases}\\
&=\begin{cases}
1\otimes V^* \sqrt{X^*X}w_0, & n=0\\
0, & n\geq 1
\end{cases}\\
&=	\left(P_{\mathbb C}\otimes V^*C_{\Delta_i}^{\frac{1}{2}}\right)(z^n\otimes w_n).
\end{align*}
It is now easy to see that
\[
\tilde{U}^{*}T\tilde{U}=
\begin{pmatrix}
		M_z\otimes I & P_\mathbb C\otimes \tilde D & 0 \\
		0 & M_z^*\otimes  D & 0 \\
		0 & 0 & N 
	\end{pmatrix}\in \mathcal B(H^2_{\mathcal E_1}(\mathbb D) \oplus H^2_{\mathcal E_2}(\mathbb D) \oplus (\mathcal H_1\oplus(\mathcal D_T\ominus \mathcal M))),
\]
where $N=S|_{\mathcal H_1}\oplus Z $, $ D = \bigoplus_{i=1}^{\infty} C_{\Delta_i}^{\frac{1}{2}}\in \mathcal B \left(\bigoplus\limits_{i}\mathcal W_{i}\right)= \mathcal B (\mathcal E_2)$, and $\tilde D = 
\begin{bmatrix}
   \bigoplus_{i=1}^{\infty} V^* C_{\Delta_i}^{\frac{1}{2}} \\
    0
\end{bmatrix}
\in \mathcal B (\mathcal E_2,\mathcal E_2\oplus (\mathcal E_1\ominus \mathcal E_2))$.

This completes the proof.
\end{proof}

Now we record the following result, in particular, if the self commutator of a hyponormal contraction $T$ is compact and $\ran [T^*,T]\subseteq \mathcal D_{T^*}\ominus \mathcal D_T$. The result reads as follows.  

\begin{cor}
Let $T$ be a hyponormal contraction on $\clh$ such that $[T^*,T]$ is compact and $\ran [T^*,T]\subseteq \mathcal D_{T^*}\ominus \mathcal D_T$. Then, T is unitarily equivalent to 
\[
\begin{pmatrix}
	M_z \otimes I  & P_{\mathbb C}\otimes \tilde D &0\\
	0         &      M_z^*\otimes D  & 0\\
	0           &    0               & N 
	\end {pmatrix}	 \in \mathcal B(H^2_{\mathcal E_1}(\mathbb D) \oplus H^2_{\mathcal E_2}(\mathbb D) \oplus \mathcal K),
\]
where $\mathcal E_1,\mathcal E_2,$ and $\mathcal K$ are Hilbert spaces such that $\mathcal E_2$ is isometrically embedded in $\mathcal E_1$, and there exists a sequence of real numbers $\{ \lambda_i \}$ with $0< \lambda_i <1$ and $\lambda_i \rightarrow 0$ as $i \rightarrow \infty$ such that $D = \diag(\sqrt{1 - \lambda_1}, \sqrt{1 - \lambda_2}, \cdots )$ on $\mathcal E_2$ is a diagonal operator, 
$\tilde D = \begin{bmatrix}
    D \\
    0
\end{bmatrix}
\in \mathcal B (\mathcal E_2,\mathcal E_2\oplus (\mathcal E_1\ominus \mathcal E_2))$ and $N:\mathcal K \to \mathcal K $ is normal.
\end{cor}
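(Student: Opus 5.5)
The plan is to re-run the proof of Theorem \ref{main 2} and to use compactness of $[T^*,T]$ to replace the spectral decomposition by an eigenvector decomposition. First, as there, $T$ is written as $\begin{pmatrix} S & X\\ 0 & Y\end{pmatrix}$ on $(\clh\ominus\cld_T)\oplus\cld_T$. We have $[T^*,T]=(P_{\ker S^*}-XX^*)\oplus 0$, and by Lemma \ref{lem1-Equality} $\overline{\ran}[T^*,T]=\ker S^*$. Since $[T^*,T]$ is a compact positive contraction, the spectral theorem for compact self-adjoint operators gives an orthonormal basis $\{e_j\}$ of $\ker S^*$ consisting of eigenvectors. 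The eigenvalues $\mu_j\in(0,1]$ have finite multiplicity and accumulate only at $0$. On each eigenvector $XX^*e_j=(1-\mu_j)e_j$.

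The key step is to choose the partition $\{\Delta_i\}$ of $\sigma([T^*,T])\setminus\{0,1\}$ to be singletons $\{\mu\}$, one for each distinct eigenvalue $\mu\in(0,1)$. This is legitimate because the spectrum is countable, and such a partition is what the proof of Theorem \ref{main 2} needs: a countable disjoint family of Borel sets, each bounded away from $1$. For $\Delta=\{\mu\}$ the spectral projection $E(\Delta)$ is the eigenprojection, which has finite rank. On $\clw_\Delta=VE(\Delta)(\clh\ominus\cld_T)$ the operator $C_\Delta=X^*X|_{\clw_\Delta}$ is therefore the scalar $1-\mu$. Listing the eigenvalues with multiplicity as $\lambda_1,\lambda_2,\dots$ and taking the orthonormal basis $\{Ve_j\}$ of $\cle_2$, we get $D=\bigoplus C_{\Delta}^{1/2}=\diag(\sqrt{1-\lambda_1},\sqrt{1-\lambda_2},\dots)$. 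Here $\lambda_i\to 0$ because the nonzero eigenvalues of a compact operator accumulate only at $0$. In the infinite-rank case the list is infinite and $\lambda_i\to0$ is genuine; in the finite case it is vacuous.

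Next, $\tilde D$ has to be brought to the form $\begin{bmatrix} D\\ 0\end{bmatrix}$. In the theorem, $\tilde D$ acts by $w\mapsto V^*C^{1/2}w$, where $V^*$ maps $\cle_2=V E(\sigma\setminus\{0,1\})(\clh\ominus\cld_T)$ unitarily onto $E(\sigma\setminus\{0,1\})(\clh\ominus\cld_T)\subseteq\cle_1$. We use this $V^*$ to identify $\cle_2$ with a subspace of $\cle_1$ (this is the isometric embedding). Under that identification $Ve_j\leftrightarrow e_j$, so $\tilde D$ becomes $D$ followed by the inclusion, that is $\begin{bmatrix} D\\0\end{bmatrix}$ relative to $\cle_1=\cle_2\oplus(\cle_1\ominus\cle_2)$. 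The summand $\cle_1\ominus\cle_2$ is the eigenspace for $\mu=1$. It lies in $\ker X^*$ and so contributes nothing to $\tilde D$.

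Finally, $N=S|_{\clh_1}\oplus Z$ is normal, exactly as in the theorem. The main obstacle I expect is bookkeeping rather than analysis. We must check that the unitary $U=\bigoplus U_i$ from the theorem, built with the basis $\{Ve_j\}$, intertwines $Y|_{\clm}$ with $M_z^*\otimes D$ where $D$ is diagonal in the same basis. This holds because each $C_{\Delta_i}$ is a scalar, so no further unitary rotation inside $\clw_i$ is needed.
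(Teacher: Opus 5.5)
Your argument is correct and is exactly the intended specialization of the main theorem; the paper records this corollary without a separate proof. Compactness makes the spectral measure of $[T^*,T]|_{\clh\ominus\cld_T}$ atomic with finite-rank atoms away from $0$, so taking the $\Delta_i$ to be the eigenvalue singletons in $(0,1)$ makes each $C_{\Delta_i}$ a scalar, which gives the diagonal $D$, and absorbing $V^*$ into the embedding $\cle_2\hookrightarrow\cle_1$ gives $\tilde D=\begin{bmatrix} D\\ 0\end{bmatrix}$; the only bookkeeping point is to restrict the basis $\{Ve_j\}$ of $\cle_2$ to indices with $\mu_j<1$, since $Ve_j=0$ when $\mu_j=1$.
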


The following examples show that the conditions $[T^*,T]$ is compact and $\text{ran}[T^*,T]\subset \mathcal D_{T^*}\ominus \mathcal D_T$ do not imply one another.

\begin{ex} 
Let $\mathcal E$ be an infinite dimensional separable Hilbert space and $M_z$ be the multiplication operator on $H^2_{\mathcal E}(\D)$. Then $I-M_zM_z^*$ is being a projection onto $\mathcal E$, and hence it is not a compact operator. 
However,
\[
\ran [M_z^*, M_z]= \mathcal D_{M_z^*}\ominus \mathcal D_{M_z}.
\]
\end{ex}

\begin{ex} 
Consider a weighted shift defined as
		\[
		Te_n=\lambda_ne_{n+1} \quad (n\geq 1),
		\]
where $\lambda_1=\frac{1}{\sqrt 2}$, and $\lambda_n \in (\frac{1}{\sqrt{2}},1)$ for $n\geq 2$ is a strictly increasing sequence converging to $1$. 
Now
		\[
		D_T=(I-T^*T)^{1/2}
		=\operatorname{diag}\left(
		\frac{1}{\sqrt 2},
		\sqrt{1-\lambda_2^2},
		\sqrt{1-\lambda_3^2},
		\ldots
		\right),
		\]
		and
		\[
		D_{T^*}=(I-TT^*)^{1/2}
		=\operatorname{diag}\left(
		1,
		\frac{1}{\sqrt 2},
		\sqrt{1-\lambda_2^2},
		\sqrt{1-\lambda_3^2},
		\ldots
		\right).
		\]
Then, we have 
\begin{align*}
			[T^*,T]
			=D^2_{T^*}-D^2_{T}
			=\diag \left(\frac{1}{ 2},\lambda_2^2-\frac{1}{ 2},\lambda_3^2-\lambda_2^2,\ldots\right)
\end{align*} 
which is compact as $\lambda_{n+1}^2-\lambda_{n}^2$ converges to $0$.
Clearly, $\mathcal D_{T^*}\ominus \mathcal D_T=\{0\}$ but ${\ran}[T^*,T]\neq 0$.
\end{ex}

\vspace{0.4cm}

\NI\textit{Acknowledgements:} 
The first author's research work is supported by the Core Research Grant (CRG/2022/006891), SERB (DST), Government of India.

\label{Ref} 

\end{document}